\title{
Enhancing Top Efficiency by Minimizing Second-Best Scores: \\
A Novel Perspective on Super Efficiency Models in DEA
}
\author{
Tomonari Kitahara\footnote{Faculty of Economics, Kyushu University, Fukuoka 819-0395, Japan. E-mail: tomonari.kitahara@econ.kyushu-u.ac.jp}      
~and 
Takashi Tsuchiya\footnote{Graduate School of Policy Studies, National Graduate Research Institute for Policy Studies, Tokyo 106-8677, Japan. 
E-mail: tsuchiya@grips.ac.jp}
}
\date{
November 1, 2024
}
\newtheorem{lemma}{Lemma}[section]
\def\bmath#1{\mbox{\boldmath $#1$}}
\def\t{\bmath{t}}
\def\u{\bmath{u}}
\def\v{\bmath{v}}
\def\x{\bmath{x}}
\def\y{\bmath{y}}
\def\0{\bmath{0}}
\def\1{\bmath{1}}
\begin{document}
\maketitle
\begin{abstract}
In this paper, we reveal a new characterization of the super-efficiency model for Data Envelopment Analysis (DEA).
In DEA, the efficiency of each decision making unit (DMU) is measured by the ratio 
the weighted sum of outputs divided by the weighted sum of inputs.
In order to measure efficiency of a DMU, ${\rm DMU}_j$, say, in CCR model, the
weights of inputs and outputs are determined so that the effiency of ${\rm DMU}_j$ 
is maximized under the constraint that the efficiency of each DMU is less than or equal to one. 
${\rm DMU}_j$ is called CCR-efficient if its efficiency score is equal to one.
It often happens that weights making ${\rm DMU}_j$ CCR-efficient are not unique but form 
continuous set.  
This can be problematic because the weights representing CCR-efficiencty of ${\rm DMU}_j$
play an important role in making decisions on its management strategy.  
In order to resolve this problem, 
we propose to choose weights which minimize the efficency of the second best DMU
enhancing the strength of ${\rm DMU}_j$, and demonstrate that this problem 
is reduced to a linear programming problem
identical to the renowned super-efficiency model.  
We conduct numerical experiments using data of Japanese commercial banks
to demonstrate the advantage of the supper-efficiency model.
\end{abstract}
\section{Introduction}
Data Envelopment Anlysis (DEA) is a widely used tool for comparing the efficiency of decision making units (DMUs). 
When each DMU has multiple inputs and outputs, the efficiency of the underlying DMU is measured by the weighted sum of inputs divided by the weighted sum of outputs. 
Under the classical CCR model developed by Charnes, Cooper and Rhodes \cite{ccr}, the weights of inputs and outputs are decided so that the effiency of the underlying DMU is maximized 
under the constraint that the efficiency of each DMU is less than or equal to one. 
A DMU whose efficiency is equal to one is said to be CCR-efficient. 
As a standard text of DEA, we list one by Cooper et al \cite{text}.
\\

For a CCR-efficient DMU, there may be multiple ways of choosing weights which give the efficiency equal to one. 
If this is the case, it is desirable to choose weights which highlight the differences between the underlying DMU and the other DMUs. 
Motivated by this idea, in this paper we consider to chose for weights which minimize the maximum efficiency of the other DMU, under the constraint that they give the efficiency equal to one for the underlying DMU.
Surprisingly, we show that the well-known super-efficiency model \cite{ap,dh,maj} exactly does this job.
Namely, the weight minimizing a score of the second-best DMU is obtained by solving the associated super-efficiency model.
\\

We conduct numerical experiments using data of Japanese commercial banks and observe that when applied to a CCR-efficient DMU, we can obtain weights which clearly reflect strengths and weaknesses of the DMU.
\section{CCR model}
Suppose there are $n$ decision making units (DMUs). 
For $i\in\{1,\dots,n\}$, $i$-th DMU has $l$ inputs $x_{1i},~\dots,x_{li}$ and $m$ outputs $y_{1i},\dots,y_{mi}$. From these inputs and outputs, we define the input vector
\[\x_i=(x_{1i},\dots,x_{li})^{\top}
\]
and the output vector
\[
\y_i=(y_{1i},\dots,y_{mi})^{\top}.
\]
We also define the matrix $X$ which is consisted of the input vectors as
\[
X=[\x_1,\x_2,\dots,\x_n]
\]
and the matrix $Y$ consisted of the output vectors
\[
Y=[\y_1,\y_2,\dots,\y_n].
\]
Further, we define matrix $X_{-i}$ as
\[
X_{-i}=[\x_1,\x_2,\dots,\x_{i-1},\x_{i+1},\dots,\x_n]
\]
that is , $X_{-i}$ is the matrix obtained by deleting the $i$-th column vector of $X$. 
Similarly, we define the matrix $Y_{-i}$.
\\

Suppose we want to measure the efficiency of $o$-th DMU ($o\in\{1,\dots,n\}$). In DEA, we measure the effiency of the $o$-th DMU by the ratio
\[
\frac{\y_o^{\top}\v}{\x_o^{\top}\u}
\]
where $\u\in\mathbb{R}^m_+$ and $\v\in\mathbb{R}^l_+$ are weight vectors.
Under CCR model, nonnegative weights $\u$ and $\v$ which maximize the efficiency of  the $o$-th DMU subject to the effiency of each DMU is not more than one, is sought. 
To be precise, the problem
\begin{equation}
\begin{array}{ll}
\max&\theta_o=\frac{\y_o^{\top}\v}{\x_o^{\top}\u}\\
{\rm subject~to}& \frac{\y_i^{\top}\v}{\x_i^{\top}\u}\le 1,~i\in\{1,\dots,n\}\ \\
&\u\ge\0,~\v\ge\0
\label{ccr_pure}
\end{array}
\end{equation}
is solved to find nonegative weights $\u$ and $\v$. This model is proposed by Charnes, Cooper and Rhodes in 1978.\\ 

Note that the ratio $\frac{\y_i^{\top}\v}{\x_i^{\top}\u}$ is unchanged if we multiply $\u$ and $\v$ by some constant. Thus we can assume $\x_o^{\top}\u=1$. Then we can rewrite (\ref{ccr_pure}) to the following linear programming problem (LP).
\[
\begin{array}{ll}
\max&\theta_o=\y_o^{\top}\v\\
{\rm subject~to}&\x_o^{\top}\u=1\\
& \y_i^{\top}\v\le\x_i^{\top}\u,~i\in\{1,\dots,n\}\ \\
&\u\ge\0,~\v\ge\0
\end{array}
\]
This problem can be expressed compactly by using the matrix $X$ and $Y$ as
\begin{equation}
\begin{array}{ll}
\max&\theta_o=\y_o^{\top}\v\\
{\rm subject~to}&\x_o^{\top}\u=1\\
& Y^T\v\le X^T\u\\
&\u\ge\0,~\v\ge\0
\label{ccr_lp}
\end{array}
\end{equation}

The optimal value $\theta_o^*$ of (\ref{ccr_lp}) is called CCR-efficient. 
If $\theta_o^*=1$, $o$-th DMU is said to be CCR-efficient, otherwise it is said to be 
CCR-inefficient. 
\section{
Minimizaing the efficiency score of the second best DMU
}
Assume $o$-th DMU is CCR-efficient, namely $\theta_o^*=1$. 
In this case there could be multiple $(\u,\v)$ which is a feasible solution of (\ref{ccr_lp}) 
and $\y_o^{\top}\v=1$. 
Out of these weight vectors, we want to choose one which minimizes the maximum value of the efficiencies of the other DMUs. 
By choosing a weight in this way, we expect that we can make the difference between $o$-th DMU (i.e. a CCR-efficient DMU) 
and the other DMUs more clearly. 
This kind of weights can be obtained by solving the following problem.
\begin{equation}
\begin{array}{ll}
\min&t_o\\
{\rm subject~to}&\frac{\y_o^{\top}\v}{\x_o^{\top}\u}=1\\
& \frac{\y_i^{\top}\v}{\x_i^{\top}\u}\le t_o,~i\in\{1,\dots,n\}\setminus\{o\} \\
&\u\ge\0,~\v\ge\0
\label{new_pure}
\end{array}
\end{equation}
As we explained above, we can assume that $\x_o^{\top}\u=1$. 
Then (\ref{new_pure}) becomes
\[
\begin{array}{ll}
\min&t_o\\
{\rm subject~to}&\x_o^{\top}\u=1\\
&\y_o^{\top}\v=1\\
& \y_i^{\top}\v\le t_o\x_i^{\top}\u,~i\in\{1,\dots,n\}\setminus\{o\} \\
&\u\ge\0,~\v\ge\0
\end{array}
\]
By setting $\tilde{\u}=t_o\u$, this problem can be reduced to the following LP. 
\[
\begin{array}{ll}
\min&t_o\\
{\rm subject~to}&\x_o^{\top}\tilde{\u}=t_o\\
&\y_o^{\top}\v=1\\
& \y_i^{\top}\v\le \x_i^{\top}\tilde{\u},~i\in\{1,\dots,n\}\setminus\{o\} \\
&\u\ge\0,~\tilde{\v}\ge\0
\end{array}
\]
By using the matrices $X_{-o}$ and $Y_{-o}$ defined in Section 2, we can express this problem as
\begin{equation}
\begin{array}{ll}
\min&t_o\\
{\rm subject~to}&\x_o^{\top}\tilde{\u}=t_o\\
&\y_o^{\top}\v=1\\
&Y_{-o}^T\v\le X_{-o}^T\tilde{\u}\\
&\tilde{\u}\ge\0,~\v\ge\0
\label{new_lp}
\end{array}
\end{equation} 

We can think of the problem (\ref{new_lp}) for a DMU which is not CCR-efficient. 
We have the following lemma for the problem (\ref{new_lp}).
\begin{lemma} 
Let $t_o^*$ be the optimal value for the problem (\ref{new_lp}). 
Then $o$-th DMU is CCR-efficient if and only if $t_o^*\le 1$.
\end{lemma}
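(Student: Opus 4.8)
The plan is to prove the two implications separately by explicitly converting feasible points of one program into feasible points of the other, keeping careful track of the normalization. The one preliminary fact I would record first is that the optimal value of the CCR program (\ref{ccr_lp}) never exceeds one: for any feasible $(\u,\v)$, the row of $Y^T\v\le X^T\u$ corresponding to $i=o$ reads $\y_o^\top\v\le\x_o^\top\u=1$, so $\theta_o^*\le1$. Consequently, ``$o$-th DMU is CCR-efficient'' is the same as ``$\theta_o^*\ge1$'', and it suffices, in the appropriate direction, to exhibit a CCR-feasible point attaining objective value one.

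For the forward implication, suppose $\theta_o^*=1$ and let $(\u,\v)$ be an optimal solution of (\ref{ccr_lp}), so $\x_o^\top\u=1$, $\y_o^\top\v=1$, $Y^T\v\le X^T\u$, and $\u,\v\ge\0$. Setting $\tilde{\u}=\u$ and $t_o=1$, the triple $(\tilde{\u},\v,t_o)$ satisfies every constraint of (\ref{new_lp}): the equality $\x_o^\top\tilde{\u}=t_o$ holds because both sides equal one, $\y_o^\top\v=1$ is inherited, and $Y_{-o}^T\v\le X_{-o}^T\tilde{\u}$ is just the subset of the rows of $Y^T\v\le X^T\u$ with $i\ne o$. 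Hence $(\tilde{\u},\v,1)$ is feasible for (\ref{new_lp}) and $t_o^*\le1$.

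For the reverse implication, suppose $t_o^*\le1$ and pick a feasible $(\tilde{\u},\v,t_o)$ of (\ref{new_lp}) with $t_o\le1$; thus $\x_o^\top\tilde{\u}=t_o$, $\y_o^\top\v=1$, $Y_{-o}^T\v\le X_{-o}^T\tilde{\u}$, and $\tilde{\u},\v\ge\0$. The temptation is to rescale by $1/t_o$, but that would inflate $\y_o^\top\v$ to $1/t_o\ge1$ and violate the $i=o$ row of the CCR constraints, so this is exactly the step that needs care. Instead I would keep $\v$ fixed and \emph{additively} enlarge $\tilde{\u}$ up to the required normalization. Choosing any $\hat{\u}\ge\0$ with $\x_o^\top\hat{\u}=1$ (such $\hat{\u}$ exists since the CCR normalization is feasible, i.e. $\x_o$ has a positive entry), set $\u'=\tilde{\u}+(1-t_o)\hat{\u}$ and $\v'=\v$. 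Then $\u'\ge\0$ because $1-t_o\ge0$, and $\x_o^\top\u'=t_o+(1-t_o)=1$. Since the data are nonnegative, $X_{-o}^T\u'\ge X_{-o}^T\tilde{\u}\ge Y_{-o}^T\v'$, while the $i=o$ row reads $\y_o^\top\v'=1\le1=\x_o^\top\u'$; together these give $Y^T\v'\le X^T\u'$. Thus $(\u',\v')$ is CCR-feasible with $\y_o^\top\v'=1$, whence $\theta_o^*\ge1$, and combined with the preliminary bound $\theta_o^*\le1$ we conclude $\theta_o^*=1$, i.e. the $o$-th DMU is CCR-efficient.

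The only real obstacle is the normalization in the reverse direction: naive multiplicative scaling breaks the self-constraint at $i=o$, and the remedy is the additive adjustment above, which works precisely because inputs and outputs are nonnegative and $1-t_o\ge0$. A minor point worth flagging is the standing assumption $\x_o\ne\0$ (needed for the normalization $\x_o^\top\u=1$ to be feasible at all); under the usual DEA assumption of positive data this is automatic.
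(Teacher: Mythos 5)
Your proof is correct, and the forward direction coincides with the paper's. In the reverse direction you take a genuinely different route and, along the way, mischaracterize the paper's route as problematic. The paper rescales only the input weight, passing from $(\tilde{\u},\v,t_o)$ to $(\tilde{\u}/t_o,\v)$: this restores the normalization $\x_o^\top(\tilde{\u}/t_o)=1$, leaves $\y_o^\top\v=1$ untouched (so the $i=o$ constraint holds with equality), and since $t_o\le1$ and $X_{-o}^T\tilde{\u}\ge\0$ one gets $Y_{-o}^T\v\le X_{-o}^T\tilde{\u}\le X_{-o}^T(\tilde{\u}/t_o)$. The inflation of $\y_o^\top\v$ to $1/t_o$ that you warn about would occur only if $\v$ were rescaled as well, which is unnecessary. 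Your additive repair $\u'=\tilde{\u}+(1-t_o)\hat{\u}$ is nevertheless a valid alternative; it buys you a slightly more robust argument, since it does not require $t_o>0$ (the paper's division silently assumes this, though positivity of the data makes it automatic), and you also make explicit the preliminary bound $\theta_o^*\le1$, which the paper uses only implicitly when concluding efficiency from a feasible point of objective value one. In short: same skeleton, a different and marginally more careful patch for the normalization step, at the cost of a spurious objection to the simpler multiplicative fix.
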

\begin{proof}
First assume $o$-th DMU is CCR-efficient. Then there exist $\u^*$ and $\v^*$ satifying
\[
\begin{array}{l}
\x_o^{\top}\u^*=1\\
\y_o^{\top}\v^*=1\\
Y^T\v^*\le X^T\u^*\\
\u\*\ge\0,~\v^*\ge\0
\end{array}
\]
Then $\u^*$ and $\v^*$ together with $t_o=1$ is a feasible solution of (\ref{new_lp}) with the objective value 1.
Thus $\t_o^*\le 1$.

Next let ($\tilde{\u}^*,\v^*,t_o^*)$ be an optimal solution of (\ref{new_lp}) with $t_o^*\le 1$. 
Then it is easy to see that $(\frac{\tilde{\u}^*}{t_o^*},\v^*)$ is a feasible solution of (\ref{ccr_lp}) with the objective value 1.
Thus $o$-th DMU is CCR-efficient.
\end{proof}

Using standard techniques of linear programming, we can show that the dual problem of (\ref{new_lp}) is given as follows.
\begin{equation}
\begin{array}{lll}
(S)&\max& \tilde{\eta} \\
&{\rm subject~to}&X\lambda_{-o}\le \x_o\\
&&Y\lambda_{-o}\ge \tilde{\eta}\y_o\\
&&\lambda_{-o}\ge\0 \label{d-new}
\end{array}
\end{equation} 
where $\tilde{\eta}\in\mathbb{R}$ and $\lambda_{-o}\in\mathbb{R}^{n-1}$ are variables.
In the dual problem we try to make inputs and outputs using those of the DMUs other than $o$-th DMU and nonnegative weights, 
so that the resulting inputs are less than or equal to that of $o$-th DMU, and the resulting output is $\tilde{\eta}$ times bigger 
than or equal to that of $o$-th DMU.
Note that this is an output-oriented model, as we try to adjust outputs of the $o$-th DMU. 
We can see that in the dual, the $o$-th unit is not included in the reference set, and the model (S) is identical to the renowned super-efficiency model \cite{ap}, 
where its objective is to rank efficient DMUs. 
We note that Mehrabian et al. \cite{maj} propose an alternative approach which overcomes some drawbacks of the original super efficiency model. 
We also remark that effects of excluding the column being scored from the input and output matrices are studied by Dul\'{a} and Hickman \cite{dh}.
%
\section{Relations between CCR and the super-efficiency model}
In this section, we investigate relations between the classical CCR model and the super-efficiency model.
First, the dual of CCR model (\ref{ccr_lp}) is given as follows.
\[
\begin{array}{ll}
\min& \theta \\
{\rm subject~to}&\theta \x_0\ge X\lambda'\\
&\y_0\le Y\lambda'\\
&\lambda' \ge\0
\end{array}
\]
where $\theta\in\mathbb{R}$ and $\lambda'\in\mathbb{R}^n$ are variables. 
Similar to (S), we make the model output-oriented 
by setting $\eta=\frac{1}{\theta}$ and $\lambda=\lambda'/\theta$, and converting the model to
\begin{equation}
\begin{array}{lll}
(C)&\max& \eta \\
&{\rm subject~to}&\x_0\ge X\lambda\\
&&\eta\y_0\le Y\lambda\\
&&\lambda\ge\0 \label{d-ccr}
\end{array}
\end{equation} 
In association with (\ref{d-ccr}), we define the following slack-maximization problem.
\begin{equation}
\begin{array}{lll}
(CSM)&\max&\1^T\epsilon_{1}+\1^T\epsilon_{2}\\
&{\rm subject~to}&\x_0-\epsilon_1=X\lambda\\
&&\eta^*\y_0+\epsilon_2= Y\lambda\\
&&\lambda\ge\0,~\epsilon_1\ge\0,~\epsilon_2\ge\0 \label{csm}
\end{array}
\end{equation} 
where $\eta^*$ is the optimal value of (C) and $\lambda\in\mathbb{R}^n,~\epsilon_1\in\mathbb{R}^l$, and $\epsilon_2\in\mathbb{R}^m$ are variables. 
Similarly, for the problem (\ref{d-new}), we define the slack-maximization problem.
\begin{equation}
\begin{array}{lll}
(SSM)&\max&\1^T\epsilon_{1}+\1^T\epsilon_{2}\\
&{\rm subject~to}&\x_0-\epsilon_1=X_{-o}\lambda_{-o}\\
&&\tilde{\eta}^*\y_0+\epsilon_2=Y_{-o}\lambda_{-o}\\
&&\lambda_{-o}\ge\0,~\epsilon_1\ge\0,~\epsilon_2\ge\0 \label{nsm}
\end{array}
\end{equation} 
where $\tilde{\eta}^*$ is the optimal value of (S) and $\lambda_{-o}\in\mathbb{R}^{n-1},~\epsilon_1\in\mathbb{R}^l$ and $\epsilon_2\in\mathbb{R}^m$ are variables. \\

Relations between the classical CCR model and the super-efficiency model are summarized in a $4\times 4$ table Table \ref{t_r}. 
In the table, ``Opt" means ``Optimal value".
Table \ref{t_r} is interpreted as follows. For example, if the optimal value of (CSM) is equal to zero and (CSM) has a unique solution $(\lambda_o,\lambda_{-o})=(1,\0)$ (see the 
first row of the table), 
then the optimal value of (S) is less than one and this is the only possibility, and so on.
\begin{table}[hbtp]
  \caption{Relations between CCR model and the super-effiency model}
  \label{t_r}
  \centering
  \scalebox{0.6}{
  \begin{tabular}{|c|c|c|c|c|c|c|}
   \hline
  &&&Opt of (S)$>1$&\multicolumn{2}{|c|}{Opt of (S)$=1$}&Opt of (S)$<1$\\ \hline
&&&&Opt of (SSM) $>0$&Opt of (SSM) $=0$& \\ \hline
\multirow{3}{8em}{Opt of (C)$=1$}&\multirow{2}{8em}{Opt of (CSM) $=0$}&Unique optimal solution $(\lambda_0,\lambda_{-o})=(1,\0)$&$\times$ (R0)&$\times$ (R1)&$\times$ (R2)& $\bigcirc$ \\ \cline{3-7}
&&Other than the above&$\times$ (R0)&$\times$ (R3)&$\bigcirc$&$\times$ (R4) \\ \cline{2-7}
&\multicolumn{2}{|c|}{Opt of (CSM) $>0$}&$\times$ (R0)&$\bigcirc$&$\times$ (R5)&$\times$ (R6) \\ \hline
\multicolumn{3}{|c|}{Opt of (C)$>1$}&$\bigcirc$&$\times$   (R0)&$\times$ (R0)&$\times$ (R0)\\ \hline
  \end{tabular}
}
\end{table}
\\

{\bf Proof of (R0)} As we obeserved in Lemma 3.1, $o$-th DMU is efficient if and only if Opt of (S) is less than or equal to one. 
Similarly it is easy to see that  $o$-th DMU is efficient if and only if Opt of (C) is equal to one. 
By combining these facts, we have (R0).\\

{\bf Proof of (R1), (R3)} Assume the optimal value of (CSM) is equal to 0 and (CSM) has a unique optimal solution 
$(\lambda_o,\lambda_{-o})=(1,\0)$.
Assume also that the optimal value of (S) is equal to one and the optimal value of (SSM) is positive. 
Then by considering an optimal solution of (S), there exsit $\lambda_{-o}\in\mathbb{R}^{n-1},~\epsilon_1\in\mathbb{R}^l$ and $\epsilon\in\mathbb{R}^m$
satisfying the following system.
\[
\begin{array}{l}
\1^T\epsilon_1+\1^T\epsilon_2>0\\
\x_o-\epsilon_1=X_{-o}\lambda_{-o}\\
\y_o+\epsilon_2=Y_{-o}\lambda_{-o}\\
\lambda_{-0}\ge\0,~\epsilon_1\ge\0,~\epsilon_2\ge\0
\end{array}
\]
Then $(0,\lambda_{-o}), \epsilon_1$ and $\epsilon_2$ is a feasible solution of (CSM), which has a positive objective value. 
This contradicts our assumption that the optimal value of (CSM) is equal to zero. 
Thus (R1) is proved. We can show (R3) similarly.\\

{\bf Proof of (R2)} Assume the optimal value of (CSM) is equal to 0 and (CSM) has a unique optimal solution 
$(\lambda_o,\lambda_{-o})=(1,\0)$. 
Assume also that the optimal value of (S) is equal to 1 and the optimal value of (SSM) is equal to zero. 
Then by considering an optimal solution of (SSM), there exist 
$\lambda_{-o}\in\mathbb{R}^{n-1},~\epsilon_1\in\mathbb{R}^l$ and $\epsilon\in\mathbb{R}^m$
satisfying the following system.
\[
\begin{array}{l}
\1^T\epsilon_1+\1^T\epsilon_2=0\\
\x_o-\epsilon_1=X_{-o}\lambda_{-o}\\
\y_o+\epsilon=Y_{-o}\lambda_{-o}\\
\lambda_{-0}\ge\0,~\epsilon_1\ge\0,~\epsilon_2\ge\0
\end{array}
\]
Then $(0,\lambda_{-o}),~\epsilon_1$ and $\epsilon_2$ is a feasible solution of (CSM), whose objective is equal to zero, namely the optimal value of (CSM). 
Thus this solution is an optimal solution of (SSM), which is supposed to have a unique optimal solution $(\lambda_o,\lambda_{-o})=(1,\0)$, a contradiction. \\

{\bf Proof of (R4)} Assume that the optimal value of (CSM) is equal to zero and it has an optimal solution $(\lambda_o,\lambda_{-o})\not=(1,\0)$ 
and the optimal value of (S) is less than 1,
Then by considering an optimal solution of (CSM), there exist $(\lambda_o,\lambda_{-o})$ satisfying the following system.
\[
\begin{array}{l}
\x_o=\lambda_o\x_o+X_{-o}\lambda_{-o}\\
\y_o=\lambda_o\y_o+Y_{-o}\lambda_{-o}
\end{array}
\]
If $\lambda_o=1$, then we have $\lambda_{-o}=\0$, contradicting our assumption. 
Thus $\lambda_0\not=1$ and in particular, $\lambda<1$. 
Then by a simple calculation leads to 
\[
\begin{array}{l}
\x_o=X_{-o}\left(\frac{1}{1-\lambda}\lambda_{-o}\right)\\
\y_o=Y_{-o}\left(\frac{1}{1-\lambda}\lambda_{-o}\right)\\
\end{array}
\]
Then $\eta=1,~\frac{1}{1-\lambda}\lambda_{-o}$ is a feasible solution of (S) with the objective value equal to 1. 
This is a contradiction since we assumed the optimal value of (S) is less than 1. \\

{\bf Proof of (R5)} Assume that the optimal values of (CSM), (S) and (SSM) are positive, one and zero, respectively. 
Then by considering an optimal solution of (CSM), there exist $(\lambda_o,\lambda_{-o}), \epsilon_1$ and $\epsilon_2$ satisfying the following system.
 \[
\begin{array}{l}
\1^T\epsilon_1+\1^T\epsilon_2>0\\
\x_o-\epsilon_1=\lambda_o\x_o+X_{-o}\lambda_{-o}\\
\y_o+\epsilon_2=\lambda_o\y_o+Y_{-o}\lambda_{-o}\\
(\lambda_o,\lambda_{-o})\ge\0\\
\epsilon_1\ge\0,~\epsilon_2\ge\0
\end{array}
\]
If $\lambda_o=1$, then we have $\lambda_o=\0$ from the second equation. 
Then we have $\epsilon_1=\0$ and $\epsilon_2=\0$, which contradict the first equation. 
Thus we have $\lambda_o\not=1$ and in particular, $\lambda_o<1$. 
Then form the second and the third equation, we have 
 \[
\begin{array}{l}
\x_o-\frac{1}{1-\lambda_0}\epsilon_1=X_{-o}\left(\frac{1}{1-\lambda}\lambda_{-o}\right)\\
\y_o+\frac{1}{1-\lambda_0}\epsilon_2=Y_{-o}\left(\frac{1}{1-\lambda}\lambda_{-o}\right) \label{eq}
\end{array}
\]
By noting the $o$-th DMU is CCR-efficient, i.e. $\tilde{\eta}^*=1$, $\frac{1}{1-\lambda}\lambda_{-o}, {1-\lambda_0}\epsilon_1$ and 
${1-\lambda_0}\epsilon_2$ is a feasible solution of (SSM) whose objective value is positive. 
This is a contradiction since we assumed that the optimal value of (SSM) is equal to 0.\\

{\bf Proof of (R6)}  Assume that the optimal values of (CSM) and (S) are positive and less than 1, respectively. 
Then similarly as the proof of (R5), there exist $(\lambda_o,\lambda_{-o}), \epsilon_1$ and $\epsilon_2$
satisfying (\ref{eq}). 
Then $\eta=1$ and $\frac{1}{1-\lambda}\lambda_{-o}$ is a feasible solution of (S) with the objective value equal to one. 
But this is a contradiction since we assumed that the optimal value of (S) is less than 1.
\section{Numerical example}
In this section, we show a numerical example using real data. 
In this example, we compare the efficiency of  21 Japanese banks (4 city banks (C), 14 regional banks (R) and 3 others (O)) in 2016. 
The data used in this section are from \cite{mt}, and they were originally from financial statements of the banks.  Following \cite{mt}, we adopt interest expenses and non-interest expenses as inputs and we select interest income and non-interest income as outputs. 
We show the data in Table \ref{t1}.

\begin{table}[hbtp]
  \caption{Inputs and outputs for 18 Japanese banks (unit: Yen)}
  \label{t1}
  \centering
  \scalebox{0.9}{
  \begin{tabular}{|c|l|c|r|r|r|r|}
    \hline
    No.&Name&Type of bank&\multicolumn{2}{c}{Inputs}&\multicolumn{2}{c}{Outputs}\\ \hline
        &        &                  &Interest&Non-interest&Interest& Non-interest\\ \hline
1 & Mizuho Financial Group & C & 577,737 & 1,977,650 & 1,445,555 & 1,847,345 \\
2 & Sumitomo Mitsui Banking Corporation & C & 553,394 & 3,573,995 & 1,912,027 & 3,221,218 \\
3 & Mitsubishi UFJ Financial Group & C & 863,677 & 3,755,124 & 2,888,134 & 3,091,434 \\
4 & Resona Bank & C & 28,422 & 505,224 & 406,328 & 355,529 \\
5 & Aozora Bank & O  & 21,507 & 61,432 & 67,154 & 67,550 \\
6 & Shinsei Bank & O  & 16,209 & 167,957 & 138,488 & 122,587 \\
7 & Japan Post Bank Co & O  & 348,746 & 1,107,938 & 1,567,512 & 329,769 \\
8 & The Chiba Bank & R & 16,589 & 133,618 & 135,533 & 92,278 \\
9 & The Bank of Yokohama & R & 10,953 & 222,692 & 183,217 & 206,953 \\
10 & The Shizuoka Bank & R & 14,661 & 188,335 & 123,005 & 126,799 \\
11 & The Bank of Fukuoka & R & 15,988 & 97,002 & 123,899 & 48,873 \\
12 & Hokuhoku Financial Group  &R & 6,243 & 141,699 & 120,786 & 66,634 \\
13 & Hokuyo Bank (North Pacific Bank) & R & 3,471 & 123,104 & 78,229 & 69,743 \\
14 & Aichi Bank & R & 1,282 & 41,188 & 31,015 & 19,016 \\
15 & Miyazaki Bank & R & 2,014 & 35,993 & 34,558 & 19,371 \\
16 & Ehime Bank & R & 2,861 & 31,728 & 33,120 & 8,943 \\
17 & Bank of Kyoto & R & 5,082 & 77,697 & 70,725 & 39,755 \\
18 & Hiroshima Bank & R & 9,417 & 83,760 & 80,579 & 57,684 \\
19 & Tottori Bank & R & 996 & 13,246 & 12,112 & 4,080 \\
20 & Akita Bank & R & 2,709 & 38,369 & 31,235 & 16,230 \\
21 & The Bank of Iwate & R & 1,486 & 36,986 & 31,863 & 19,268 \\ \hline
  \end{tabular}
}
\ \\
\ \\

  \caption{The effiency of banks (CCR-efficient banks are in bold letters.)}
  \label{t2}
  \centering
  \scalebox{0.80}{
  \begin{tabular}{|c|l|c|c|c|c|c|}
    \hline
    No.&Name&CCR-Efficiency&\multicolumn{2}{|c|}{Input weight}&\multicolumn{2}{|c|}{Output weight}\\ \hline
	&	&	       &$u_1$&$u_2$&$v_1$&$v_2$\\ \hline
1 & Mizuho Financial Group & 0.876 &   $2.68\times10^{-7}$ & $4.27\times10^{-7}$& 0 & $4.74\times10^{-7}$\\
2 & Sumitomo Mitsui Banking Corporation & 0.911 &   $1.60\times10^{-7}$ & $2.55\times10^{-7}$ & 0 & $2.83\times10^{-7}$ \\
3 & Mitsubishi UFJ Financial Group & 0.798 &   $1.46\times10^{-7}$& $2.33\times10^{-7}$ & 0&$2.58\times10^{-7}$ \\
4 & Resona Bank & 0.905 &   $4.96\times10^{-6}$ & $1.70\times10^{-6}$ & $1.76\times10^{-6}$ & $5.33\times10^{-7}$\\
5 & {\bf Aozora Bank} & {\bf 1.000} &  0 & $1.63\times10^{-5}$ & $1.06\times10^{-5}$ & $4.26\times10^{-6}$\\
6 & Shinsei Bank & 0.869 &   $5.01\times10^{-6}$ & $5.47\times10^{-6}$ & $3.85\times10^{-6}$ & $2.75\times10^{-6}$\\
7 & {\bf Japan Post Bank Co} & {\bf 1.000}  & 0 & $9.03\times10^{-7}$ & $6.38\times10^{-7}$ & 0\\
8 & The Chiba Bank & 0.952 &   $6.15\times10^{-6}$ & $6.72\times10^{-6}$ & $4.72\times10^{-6}$ & $3.37\times10^{-6}$ \\
9 & {\bf The Bank of Yokohama} & {\bf 1.000}  & $8.35\times10^{-5}$ & $3.83\times10^{-7}$ & 0 & $4.83\times10^{-6}$ \\
10 & The Shizuoka Bank & 0.744 &  $4.54\times10^{-6}$ & $4.96\times10^{-6}$ & $3.48\times10^{-6}$ & $2.49\times10^{-6}$ \\
11 & {\bf The Bank of Fukuoka} & {\bf 1.000} & $2.35\times10^{-5}$ & $6.43\times10^{-6}$ & $8.07\times10^{-6}$ & 0 \\
12 & Hokuhoku Financial Group & 0.962 & $4.98\times10^{-5}$ & $4.86\times10^{-6}$ & $7.97\times10^{-6}$ & 0 \\
13 & {\bf Hokuyo Bank (North Pacific Bank)} & {\bf 1.000}   & $2.88\times10^{-4}$ & 0 & $9.98\times10^{-6}$ & $3.14\times10^{-6}$ \\
14 & Aichi Bank & {\bf 1.000} &  $4.52\times10^{-4}$ & $1.02\times10^{-5}$ & $2.37\times10^{-5}$ & $1.39\times10^{-5}$ \\
15 & {\bf Miyazaki Bank} & {\bf 1.000} & $6.97\times10^{-5}$ & $2.39\times10^{-5}$ & $2.47\times10^{-5}$ & $7.48\times10^{-6}$ \\
16 & Ehime Bank & 0.985 & $8.66\times10^{-5}$ & $2.37\times10^{-5}$ & $2.97\times10^{-5}$ & 0 \\
17 & Bank of Kyoto & 0.926 & $3.15\times10^{-5}$ & $1.08\times10^{-5}$ & $1.12\times10^{-5}$ & $3.39\times10^{-6}$ \\
18 & Hiroshima Bank & 0.927 & $9.91\times10^{-6}$ & $1.08\times10^{-5}$ & $7.61\times10^{-6}$ & $5.44\times10^{-6}$ \\
19 & Tottori Bank & 0.900 & $2.16\times10^{-4}$ & $5.92\times10^{-5}$ & $7.43\times10^{-5}$ & 0 \\
20 & Akita Bank & 0.812 &  $7.57\times10^{-5}$ & $2.07\times10^{-5}$ & $2.60\times10^{-5}$ & 0 \\
21 & {\bf The Bank of Iwate} & {\bf 1.000}  & $3.69\times10^{-4}$ & $1.22\times10^{-5}$ & $2.50\times10^{-5}$ & $1.05\times10^{-5}$ \\ \hline
  \end{tabular}
}
\end{table}
First we check the efficiency of each bank (DMU) by CCR model solving (\ref{ccr_lp}). Results are summarized in Table \ref{t2}. 
CCR-efficient banks are indicated in bold letters.
We see that there are 7 CCR-efficient banks whose efficiency values are 1.

Now, we choose one particular CCR-effient bank, namely the Bank of Yokohama
to feature the super-efficiency model in comparison with CCR model. 
From Table \ref{t2}, we see that 
the input and weight weights $(\u_{ccr}^*,\v_{ccr}^*)$ by CCR model for the Bank
of Yokohama are
\begin{equation} \label{weightCCR}
\u_{ccr}^*=(8.35\times10^{-5},3.83\times10^{-7})^{\top}\ \hbox{and}\ \v_{ccr}^*=(0,4.83\times10^{-6})^{\top}.
\end{equation}   
The efficiency of each bank based on 
this weight is shown in the first column of Table 4.  We see that there is 
another efficient bank Hokuyo bank under this weight. 

Next we solve (\ref{new_lp}) with $o=9$ (9 is ID of the Bank of Yokohama)
to compute the input and output weights by the super-efficiency model and obtain 
that
$t^*=0.720,~\tilde{\u}_{s}^*=(2.36\times10^{-5},2.07\times10^{-6})^{\top},~\v_{s}^*=(0,4.83\times10^{-6})^{\top}$. 
Dividing $\tilde{\u}_s$ by $t^*$ to scale the weight so that the efficiency of 
the most efficient unit, namely, the Bank of Yokohama is 1, we obtain the weights 
\begin{equation}\label{weightSE}
\tilde{\u}_{s}/t^*=(3.28\times10^{-5},2.88\times10^{-6})^{\top}\ \hbox{and}\ 
\v_{s}^*=(0,4.83\times10^{-6})^{\top}
\end{equation}
by the super-efficient model.  
The efficiency of each bank based on this weight is shown in the second column of Table \ref{t3}.
%
\begin{table}[hbtp]
  \caption{Comparison of the super-efficiency model and CCR model}
  \label{t3}
  \centering
  \scalebox{0.85}{
  \begin{tabular}{|c|l|c|c|}
    \hline
    No.&Name&Efficiency (CCR model) &Efficiency (super-efficiency model) \\ \hline
1 & Mizuho Financial Group & 0.182 & 0.362 \\
2 & Sumitomo Mitsui Banking Corporation & 0.327 & 0.547 \\
3 & Mitsubishi UFJ Financial Group & 0.203 & 0.382 \\
4 & Resona Bank & 0.669 & 0.720 \\
5 & Aozora Bank & 0.179 & 0.370 \\
6 & Shinsei Bank & 0.418 & 0.584 \\
7 & Japan Post Bank Co & 0.054 & 0.109 \\
8 & The Chiba Bank & 0.310 & 0.480 \\
9 & The Bank of Yokohama & 1.000 & 1.000 \\
10 & The Shizuoka Bank & 0.473 & 0.599 \\
11 & The Bank of Fukuoka & 0.172 & 0.294 \\
12 & Hokuhoku Financial Group & 0.559 & 0.526 \\
13 & Hokuyo Bank (North Pacific Bank) & 1.000 & 0.720 \\
14 & Aichi Bank & 0.748 & 0.572 \\
15 & Miyazaki Bank & 0.514 & 0.552 \\
16 & Ehime Bank & 0.172 & 0.233 \\
17 & Bank of Kyoto & 0.423 & 0.492 \\
18 & Hiroshima Bank & 0.341 & 0.507 \\
19 & Tottori Bank & 0.223 & 0.279 \\
20 & Akita Bank & 0.325 & 0.394 \\
21 & The Bank of Iwate & 0.673 & 0.600 \\
\hline
  \end{tabular}
}
\end{table}

It is seen that,
while we have another efficient bank
Hokuyo Bank (North Pacific Bank) other than the Bank of Yokohama in CCR model, 
the Bank of Yokohama bank is the only efficient bank in the super-efficiency model, where efficiency of all other 
banks are lower than 0.720 with the second best being attained by Hokuyo Bank (North Pacific Bank) and Resona Bank.  
Thus, the super-efficiency model succeeds better in enhancing 
the strength of Yokohama bank.

By comparing the weights \eqref{weightCCR} of CCR model and \eqref{weightSE} of the super-efficiency model, 
we observe that there exists considerable 
difference in the input weights although the output weights are identical. 
In paticular, CCR model puts much higher weight on interest expenses than 
the super-efficiency model.  This suggests that
strength of the Bank of Yokohama lies in the balance between interest and non-interest expenses.
\section{Conclusion}
In this paper, we revealed a new characteristic of the super-efficiency model for DEA. 
In CCR model, weights of inputs and outputs are decided so that the effiency of the underlying DMU is maximized. 
For a CCR-efficient DMU, we considered to choose weights  so that the efficiency score of the second best DMU is minimized. 
We showed that the problem is reduced to a linear programming problem which
is identical to the dual of the famous super-efficiency model. 
We conducted numerical experiments and showed that we can obtain weights that highlight differences between efficient and non-efficient DMUs.
\section*{Acknowledgement}
The authors would like thank Mr.~Ulugbek Nizamov, a former student of the second author from the Republic of Uzbekistan, whose
master thesis \cite{mt} provided a motivation to this research. 
They also thank Prof.~Tomohiro Kinoshita of Department of Economics, Otemon Gakuin University 
(formerly National Graduate Institute for Policy Studies) for suggesting
the input and output variables of the DEA model of bank efficiency in Section 5.
The authors are also grateful to Prof. Kazuyuki Sekitani of Seikei University for pointing out connection of our analysis to super efficiency models \cite{ap}.
This work was supported by JSPS KAKENHI Grant Number JP21H03398.

%

\end{document}